\def\alg{\mathop{\mathrm{alg}}\nolimits}
\def\Hal{H^{\bullet}_{\alg}}
\def\Hom{\mathop{\mathrm{Hom}}\nolimits}
\def\Ext{\mathop{\mathrm{Ext}}\nolimits}
\def\irr{\mathop{\mathrm{irr}}\nolimits}
\def\Ulrich{\mathop{\mathrm{Ulrich}}\nolimits}
\def\Num{\mathop{\mathrm{Num}}\nolimits}
\def\Pic{\mathop{\mathrm{Pic}}\nolimits}
\def\ord{\mathop{\mathrm{ord}}\nolimits}
\def\tor{\mathop{\mathrm{tor}}\nolimits}
\def\gcd{\mathop{\mathrm{gcd}}\nolimits}
\newtheorem*{rep@theorem}{\rep@title}
\newcommand{\newreptheorem}[2]{%
\newenvironment{rep#1}[1]{%
 \def\rep@title{#2 \ref{##1}}%
 \begin{rep@theorem}}%
 {\end{rep@theorem}}}
\newtheorem{Thm}{Theorem}[section]
\newtheorem{Prop}[Thm]{Proposition}
\newtheorem{Lem}[Thm]{Lemma}
\newtheorem{Cor}[Thm]{Corollary}
\newtheorem*{theorem*}{Theorem}
\newtheorem*{lemma*}{Lemma}
\newtheorem*{proposition*}{Proposition}
\newtheorem*{conjecture*}{Conjecture}
\newtheorem*{corollary*}{Corollary}
\newtheorem{Thm-int}{Theorem}
\theoremstyle{definition}
\newtheorem{Def-s}[Thm]{Definition}
\newtheorem{Def}[Thm]{Definition}
\theoremstyle{remark}
\newtheorem{Rem}[Thm]{Remark}
\title{Vector bundles on bielliptic surfaces: Ulrich bundles and degree of irrationality}
\author{Edoardo Mason\footnote{\noindent Department of Mathematics, Stockholm University, Universitetsvägen 10 A, Stockholm, Sweden\\
Email: edoardo.mason@math.su.se\\
Data Availability Statement: This paper is a theoretical study and does not rely on any datasets.\\
Conflict of interest: The author has no conflict of interest to declare that is relevant to the
content of this paper.}}
\date{}
\begin{document}

\maketitle

\begin{abstract}
This paper deals with two problems about vector bundles on bielliptic surfaces. The first is to give a classification of Ulrich bundles on such surfaces $S$, which depends on the topological type of $S$. In doing so, we study the weak Brill-Noether property for moduli spaces of sheaves with isotropic Mukai vector. Adapting an idea of Moretti \cite{moretti}, we also interpret the problem of determining the degree of irrationality of bielliptic surfaces in terms of the existence of certain stable vector bundles of rank 2, completing the work of Yoshihara \cite{MR1810587}.
\end{abstract}

\section{Introduction}

Moduli spaces of stable sheaves on projective surfaces have been extensively studied in the last decades. Recently, Nuer \cite{nuer} investigated their non-emptiness and their birational geometry in the case of bielliptic surfaces $S$. In this paper, we use this knowledge about sheaves on bielliptic surfaces to study two problems: the classification of Chern characters of Ulrich bundles and the computation of the degree of irrationality of $S$.

\subsection*{Ulrich bundles}

Ulrich bundles made their first appearance in Commutative Algebra with \cite{ulr} and have become an active area of research in Algebraic Geometry since the influential paper \cite{esw}; we refer to \cite{beauville} for a detailed introduction. They are coherent sheaves $\mathscr{F}$ on a polarized projective variety $(X,H)$ such that $\mathscr{F}(-jH)$ has vanishing cohomology in all degrees for $1 \leq j \leq \dim X$. The interest in such sheaves originates from the fact that their existence has strong implications on the geometry of $X$. In particular, it implies that the \emph{cone of cohomology tables} of $(X,H)$
\[
\mathbb{Q}_{\geq 0}\left\langle\left(h^i(X,\mathscr{F}(jH))\right)_{0 \leq i \leq \dim X, j \in \mathbb{Z}}: \mathscr{F} \in \text{Coh}(X)\right\rangle \subseteq M_{\dim X + 1, \infty}(\mathbb{Q})
\]
is the same as for the projective space \cite{MR2810424}. Furthermore, it implies that the Cayley-Chow form of $(X,H)$ has a presentation as determinant of a matrix with linear entries in the Pl\"ucker coordinates. It is asked in \cite{esw} whether all projective varieties support Ulrich bundles and since then this has been proved in various cases; a non-exhaustive list includes curves \cite{esw}, ruled surfaces \cite{MR3680998}, K3 surfaces \cite{farkas}, abelian surfaces \cite{ulrab}, Fano threefolds \cite{MR4620673}, complete intersections \cite{MR1117634} and Grassmannians \cite{MR3302625}.

In the case of surfaces $S$ of Kodaira dimension zero, the existence of Ulrich bundles of rank 2 has been established (see e.g. \cite{beauville}), motivating the problem of classifying all Chern characters of Ulrich bundles. This problem can be seen as part of the more far-reaching question of determining the cohomology of the general stable sheaf on $S$ with a given Chern character. A starting point would be to determine for which Chern characters the following cohomology vanishing condition on the general stable sheaf is satisfied. 

\begin{Def} \label{wbn}
A moduli space of sheaves $M_H(\mathbf{v})$ satisfies the \emph{weak Brill-Noether property} if there exists a sheaf $\mathscr{F} \in M_H(\mathbf{v})$ such that $\mathscr{F}$ has at most one non-zero cohomology group.
\end{Def}

This question has been solved in the case of K3 surfaces of Picard rank one \cite{MR4593904} and of abelian surfaces \cite{MR4906400}, leading, as a corollary, to the classification of Ulrich bundles on such surfaces. In the case of Enriques surfaces, such classification depends on a lattice-theoretic conjecture formulated in \cite{MR3829179}. In this paper we address the problem of classifying Ulrich bundles on bielliptic surfaces. The starting point is the following result on the weak Brill-Noether property for line bundles, where we refer to Section 2 for the necessary notations.

\begin{Prop} \label{wbnlb}
\emph{[Lemma \ref{lemma1wbn}, \ref{lemma2wbn}]} Let $S$ be a bielliptic surface.
\begin{itemize}
    \item[$1)$] If $S$ is of type $1,2,3,5$, then the moduli space of line bundles of numerical class $aA_0 + bB_0$ satisfies the weak Brill-Noether property for all $a,b \in \mathbb{Z}$.
    \item[$2)$] If $S$ is of type $4,6,7$, then the weak Brill-Noether property fails exactly for the moduli spaces of line bundles of numerical class $bB_0$, with $\lvert b \rvert \geq \lambda_S$.
\end{itemize}
\end{Prop}

The problem of determining the weak Brill-Noether property for sheaves of higher rank remains open, but we give a positive result for isotropic Mukai vectors, see Proposition \ref{isotropic} for the precise statement.

The main classification result is the following. As we will see, the proof relies on the existence results for stable sheaves recently obtained by Nuer \cite{nuer} and reduces to Proposition \ref{wbnlb} and to a couple of key cases in rank 2 and 3 which are dealt with using both classical methods and derived category techniques.

\begin{Thm} \label{main}
Let $S$ be a bielliptic surface polarized by a generic ample divisor $H$ of numerical class $aA_0 +bB_0$, with $b \geq \lambda_S$, and assume that $H$ is not divisible in $\text{\emph{Num}}(S)$. Then:
\begin{itemize}
    \item[$1)$] If $S$ is of type $1,2,3,5$, then it carries Ulrich bundles of Mukai vectors
    \[
        \mathbf{v}^{\Ulrich}(r,k) := (r,kaA_0 + (3r-k)bB_0,2rab)
    \]
    for all $r\geq 1$, $r \leq k \leq 2r$.
    \item[$2)$] If $S$ is of type $4,6,7$, then it carries Ulrich bundles of every rank $r \geq 2$, but it does not carry Ulrich line bundles and not all $\mathbf{v}^{\Ulrich}(r,k)$, with $r\geq 2$, are Mukai vectors of Ulrich bundles.
\end{itemize}
\end{Thm}

\subsection*{Degree of irrationality}

\begin{Def} \label{irrdeg}
The \emph{degree of irrationality} $\irr(X)$ of an algebraic variety $X$ is the minimal degree of a generically finite rational map $X \dashrightarrow \mathbb{P}^{\dim X}$.
\end{Def}

Clearly, $\irr(X)$ is a birational invariant and, if $X$ is defined over a field $k$ and has rational function field $k(X)$, it coincides with the minimal possible degree $[k(X):k(x_1,\dots,x_{\dim X})]$, where $x_1,\dots,x_{\dim X}$ are algebraically independent elements of $k(X)$. For smooth projective curves, it coincides with the well-studied notion of gonality, while, already for surfaces, computing the degree of irrationality can be a very challenging problem. Since $\irr(X)$ has already been computed for very general hypersurfaces of high degree \cite{MR3705293} using positivity properties of the canonical bundle, the case of surfaces of Kodaira dimension zero is of particular interest and has indeed received much attention \cite{MR1337188}, \cite{MR1327053}, \cite{MR4039500}, \cite{MR4411875}, \cite{MR4891402}. Yoshihara \cite[Thm. 2.8]{MR1810587} computed $\irr(S)$ for all bielliptic surfaces except one case (type 6 in Table \ref{invariants}). In particular, he proved that $S$ has an involution such that the quotient is rational if and only if $S$ is of type 1 or 2, concluding that $\irr(S) = 2$ in these cases and $\irr(S) \geq 3$ otherwise. His strategy to prove the other inequality $\irr(S) \leq 3$ consists in showing the existence of a smooth curve of genus 3 in the linear system of an ample divisor on $S$, a method which fails for type 6. Here we prove the following result, which completes the work of Yoshihara.

\begin{Thm} \label{main2}
For all bielliptic surfaces $S$, we have $\irr(S) \leq 3$.
\end{Thm}

Our strategy provides a uniform treatment of the different types of bielliptic surfaces and relies on an adaptation of a method developed by Moretti \cite{moretti} for K3 surfaces.

Throughout the paper, we work over $\mathbb{C}$.

\section{Background material}

\subsection*{Ulrich bundles}

\begin{Def}
Let $X$ be a smooth projective variety. A Ulrich bundle on $X$ with respect to an ample divisor $H$ is a coherent sheaf $\mathscr{F}$ on $X$ such that
\[
H^{\bullet}(X,\mathscr{F}(-jH)) = 0 \quad \text{for $1 \leq j \leq \dim X$.}
\]
\end{Def}

We remark that, via the Auslander-Buchsbaum formula, this cohomology vanishing condition forces $\mathscr{F}$ to be locally free, justifying the use of the term \emph{bundle} in the definition. We refer to \cite[Thm. 2.3]{beauville} for equivalent characterizations of Ulrich bundles.

The starting point of our classification of Ulrich bundles on bielliptic surfaces is the fact that they are always semistable, hence their study can rely on the construction of moduli spaces of sheaves on surfaces developed by Gieseker and Maruyama. In the following we will work with $\mu$-semistability; let us nevertheless recall the chain of implications
\[
\mu\text{-stable} \Rightarrow \text{Gieseker stable} \Rightarrow \text{Gieseker semistable} \Rightarrow \mu\text{-semistable}.
\]

\begin{Thm}[{\cite[Thm. 2.9]{hartshorne}}] \label{semistable}
Let $E$ be a Ulrich bundle on a smooth projective variety $X$ with respect to $H$. Then:
\begin{itemize}
    \item[$1)$] $E$ is $H$-Gieseker semistable;
    \item[$2)$] if $E$ is $H$-Gieseker stable then it is also $\mu_H$-stable. 
\end{itemize}
\end{Thm}

\subsection*{Bielliptic surfaces}

According to the Kodaira-Enriques classification of smooth projective surfaces over $\mathbb{C}$, surfaces of Kodaira dimension zero are divided in 4 birational classes: K3 surfaces, abelian surfaces, Enriques surfaces and bielliptic surfaces. Among minimal surfaces of Kodaira dimension zero, bielliptic surfaces can be identified by their irregularity $q(S) = \dim H^1(S,\mathscr{O}_S) = 1$. 

If $S$ is a bielliptic surface, then its Albanese map is an elliptic fibration over an elliptic curve. Using the canonical bundle formula, one can deduce that the canonical bundle $\omega_S$ is algebraically trivial. Bielliptic surfaces have been classified by Bagnera-de Franchis: each bielliptic surface is the \'{e}tale quotient of a product of elliptic curves $A \times B$ by the action of a finite abelian group $G$ acting on $A$ by translations, so that $A/G$ is still an elliptic curve, and on $B$ in such a way that $B/G \cong \mathbb{P}^1$. Thus, $S = (A \times B) / G$ admits two elliptic fibrations induced by the $G$-equivariant projections of $A \times B$ onto its factors. In particular, $f:S \to A/G$ coincides with the Albanese morphism of $S$ and all its fibers are smooth and isomorphic to $B$, while the general fiber of $g:S \to B/G \cong \mathbb{P}^1$ is isomorphic to $A$. We adopt the usual abuse of notation of denoting by $A$ (resp. $B$) the class of the general fiber of $g:S \to \mathbb{P}^1$ (resp. $f:S \to A/G$) in $\Num(S) \cong H^2(S,\mathbb{Z})/\tor$. Then we define
\[
A_0 = \frac{1}{\ord(\omega_S)}A, \quad B_0 = \frac{1}{\lambda_S}B,
\]
where $\lambda_S = \frac{\lvert G \rvert}{\ord(\omega_S)}$, and by Serrano \cite{serrano} we have that $\Num(S) = \mathbb{Z}A_0 \oplus \mathbb{Z}B_0$. In this basis, the numerical group $\Num(S)$ is isomorphic to the hyperbolic plane with the standard symplectic form $\begin{psmallmatrix}
    0 &1\\
    1 &0
\end{psmallmatrix}$.

Depending on the group $G$, bielliptic surfaces fall into 7 families, whose invariants are summarized in Table \ref{invariants}, where $(m_1,\dots,m_s)$ denotes the multiplicities of the multiple fibers of $g$.

\begin{table}[ht]
\centering
\caption{Topological invariants of the seven families of bielliptic surfaces}
\begin{tabular}{c c c c c c}
\hline
Type & $G$ & $(m_1,\dots,m_s)$ & $H^2(S,\mathbb{Z})$ & $\ord(\omega_S)$\\
\hline
1 & $\mathbb{Z}/2\mathbb{Z}$ & $(2,2,2,2)$ & $\mathbb{Z}[\frac{1}{2}A]\oplus \mathbb{Z}[B]\oplus \mathbb{Z}/2\mathbb{Z}\oplus\mathbb{Z}/2\mathbb{Z}$ & 2\\
2 & $\mathbb{Z}/2\mathbb{Z}\times\mathbb{Z}/2\mathbb{Z}$ & $(2,2,2,2)$ & $\mathbb{Z}[\frac{1}{2}A]\oplus\mathbb{Z}[\frac{1}{2}B]\oplus\mathbb{Z}/2\mathbb{Z}$ & 2\\
3 &  $\mathbb{Z}/4\mathbb{Z}$ & $(2,4,4)$ & $\mathbb{Z}[\frac{1}{4}A]\oplus\mathbb{Z}[B]\oplus\mathbb{Z}/2\mathbb{Z}$ & 4\\
4 & $\mathbb{Z}/4\mathbb{Z}\times\mathbb{Z}/2\mathbb{Z}$ & $(2,4,4)$ & $\mathbb{Z}[\frac{1}{4}A]\oplus\mathbb{Z}[\frac{1}{2}B]$ & 4\\
5 & $\mathbb{Z}/3\mathbb{Z}$ & $(3,3,3)$ & $\mathbb{Z}[\frac{1}{3}A]\oplus\mathbb{Z}[B]\oplus\mathbb{Z}/3\mathbb{Z}$ & 3\\
6 & $\mathbb{Z}/3\mathbb{Z}\times\mathbb{Z}/3\mathbb{Z}$ & $(3,3,3)$ & $\mathbb{Z}[\frac{1}{3}A]\oplus\mathbb{Z}[\frac{1}{3}B]$ & 3\\
7 & $\mathbb{Z}/6\mathbb{Z}$ & $(2,3,6)$ & $\mathbb{Z}[\frac{1}{6}A]\oplus\mathbb{Z}[B]$ & 6\\
\hline
\end{tabular}
\label{invariants}
\end{table}

Since the canonical bundle $\omega_S$ has finite order in $\Pic(S)$, we can consider the associated \'{e}tale covering
\[
\pi:\tilde{X} = {\underline{\text{Spec}}}_{\mathscr{O}_S}(\mathscr{A}) \to S,
\]
where $\mathscr{A} = \bigoplus_{j=0}^{\ord(\omega_S)-1} \omega_S^{\otimes j}$ with its natural $\mathscr{O}_S$-algebra structure. It is easy to show that $\tilde{X}$ is an abelian surface sitting as an intermediate \'{e}tale quotient of $A \times B$ by a subgroup $\mathbb{Z}/\lambda_S\mathbb{Z} = H < G$. The abelian variety $\tilde{X}$ is called the \emph{canonical cover} of $S$. If it coincides with the product $A \times B$, that is, if $\lambda_S = 1$, the bielliptic surface $S$ is said to be \emph{split}. Clearly, $\tilde{X}$ carries two elliptic fibrations $\tilde{X} \to A/H$, $\tilde{X} \to B/H$ with fibers isomorphic to $B$ and $A$ respectively. Notice that, if we denote by $B_{\tilde{X}}$, $A_{\tilde{X}}$ the classes of these fibers in $\Num(\tilde{X})$, we have
\[
\pi^*(A_0) = A_{\tilde{X}}, \quad \pi^*(B_0)= \frac{\ord(\omega_S)}{\lambda_S}B_{\tilde{X}}.
\]

\begin{Lem}[{\cite[Lemma 2.4]{nuer}}] \label{lem:intermediate bielliptic order composite}
If $\ord(\omega_S)$ is composite, with proper divisor $d$, then there is a bielliptic surface $\tilde{S}$ sitting as an intermediate \'{e}tale cover between $S$ and $\tilde{X}$, $\pi':\tilde{S} \to S$,
such that $\ord(\omega_{\tilde{S}}) =\frac{\ord(\omega_S)}{d}$ and \[ \pi'^*(A_0)=\tilde{A}_0,\quad\pi'^*(B_0)=\frac{\ord(\omega_S)}{\ord(\omega_{\tilde{S}})}\tilde{B}_0=d\tilde{B}_0,
\] where $\tilde{A}_0,\tilde{B}_0$ are the natural generators of $\Num(\tilde{S})$.  
\end{Lem}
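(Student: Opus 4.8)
The plan is to realize $\tilde S$ explicitly as an intermediate quotient of the product $A\times B$ and to read off both its bielliptic structure and the numerical data from the group action. Write $n=\text{ord}(\omega_S)$ and recall that the canonical cover refines the Bagnera--de Franchis presentation to $A\times B\to\tilde X=(A\times B)/H\xrightarrow{\pi}S$, where $H=\mathbb{Z}/\lambda_S\mathbb{Z}$ acts by translations on both factors and the Galois group $G/H\cong\mathbb{Z}/n\mathbb{Z}=\langle\sigma\rangle$ of $\pi$ acts on $A$ by a translation and on $B$ by a rotation (an automorphism fixing the origin) of order exactly $n$. The key input here is the standard identification of $\text{ord}(\omega_S)$ with the order of this rotation: since $\omega_{A\times B}$ is trivial and $G$ acts on the generator $dz_A\wedge dz_B$ of $H^0(\omega_{A\times B})$ through the rotation character on $dz_B$, the torsion order of $\omega_S$ equals the order of $\sigma$ acting on $B$.

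Given a proper divisor $1<d<n$, I would set $\tilde S:=\tilde X/\langle\sigma^d\rangle=(A\times B)/H'$, where $H'\leq G$ is the preimage of $\langle\sigma^d\rangle$, so that $\lvert H'\rvert=\lambda_S\cdot(n/d)$ and $\pi$ factors as $\tilde X\to\tilde S\xrightarrow{\pi'}S$ with $\deg\pi'=[\langle\sigma\rangle:\langle\sigma^d\rangle]=d$. To see that $\tilde S$ is again bielliptic, observe that $H'$ still acts on $A$ only by translations, so $A/H'$ is an elliptic curve, while $\sigma^d$ acts on $B$ as a rotation of order $n/d\geq 2$ fixing the origin; hence $B/H'\cong\mathbb{P}^1$ and the Bagnera--de Franchis criterion applies. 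The same identification of the canonical order with the rotation order gives $\text{ord}(\omega_{\tilde S})=n/d$, and then $\lambda_{\tilde S}=\lvert H'\rvert/\text{ord}(\omega_{\tilde S})=\lambda_S$. One small point to verify is that the translation representation $G\hookrightarrow A$ is injective; this holds because every rotation of $B$ has a fixed point, so freeness of the $G$-action on $A\times B$ forces the translation part to be nonzero on each nontrivial element.

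For the numerical statement I would use the two induced fibrations and the commutative squares relating them. Writing $A=g^*[\mathrm{pt}]$ for the fiber class of $g\colon S\to B/G\cong\mathbb{P}^1$ and $B=f^*[\mathrm{pt}]$ for the fiber class of the Albanese map $f\colon S\to A/G$, the morphism $\pi'$ fits into squares over the base maps $\rho\colon B/H'\to B/G$ and $\psi\colon A/H'\to A/G$. Both base maps have degree $d$: the map $\rho$ because $B/H$ surjects onto $B/H'$ and $B/G$ with degrees $n/d$ and $n$, and $\psi$ because it is the isogeny $A/H'\to A/G$ of degree $[G:H']=d$ coming from the injective translation action. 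Pulling back a point through each square then yields $\pi'^*A=d\tilde A$ and $\pi'^*B=d\tilde B$, where $\tilde A,\tilde B$ are the corresponding fiber classes on $\tilde S$. Dividing by the respective normalizations $A_0=\frac{1}{n}A$, $B_0=\frac{1}{\lambda_S}B$, $\tilde A_0=\frac{d}{n}\tilde A$, $\tilde B_0=\frac{1}{\lambda_S}\tilde B$ gives $\pi'^*A_0=\frac{d}{n}\tilde A=\tilde A_0$ and $\pi'^*B_0=\frac{d}{\lambda_S}\tilde B=d\tilde B_0$, as claimed.

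The main obstacle I anticipate is not the intersection-theoretic bookkeeping but the structural step: verifying that the intermediate quotient $\tilde S$ is genuinely bielliptic with canonical order exactly $n/d$, rather than abelian or of the wrong order. This hinges on correctly tracking how the cyclic Galois group, and in particular its rotation action on the factor $B$, descends to $\tilde S$, together with the identification of $\text{ord}(\omega)$ with the order of that rotation. Once this is in place, the asymmetry between the normalizations of $A_0$ (which involves $\text{ord}(\omega)$ and so changes by the factor $d$) and of $B_0$ (which involves $\lambda$ and is unchanged) is exactly what makes $\pi'^*$ the identity on the $A_0$-generator but multiplication by $d$ on the $B_0$-generator.
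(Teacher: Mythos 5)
The paper does not actually prove this lemma; it is quoted verbatim from \cite[Lemma 2.4]{nuer}, so there is no internal proof to compare against. Your argument is the natural one and, in substance, it is correct and matches the expected source construction: take $H'\leq G$ to be the preimage of the index-$d$ subgroup of the Galois group $G/H\cong\mathbb{Z}/\text{ord}(\omega_S)\mathbb{Z}$ of $\pi$, set $\tilde{S}=(A\times B)/H'$, check biellipticity via the Bagnera--de Franchis criterion, identify $\text{ord}(\omega)$ with the order of the character by which the group acts on $dz_A\wedge dz_B$ (equivalently, the order of the image in $\text{Aut}_0(B)$), and chase fiber classes through the two commutative squares over $B/H'\to B/G$ and $A/H'\to A/G$. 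The bookkeeping that makes the statement asymmetric is also handled correctly: $\lambda_{\tilde{S}}=\lvert H'\rvert/\text{ord}(\omega_{\tilde{S}})=\lambda_S$, so the $B_0$-normalization is unchanged while the $A_0$-normalization rescales by $d$, giving $\pi'^*(A_0)=\tilde{A}_0$ and $\pi'^*(B_0)=d\tilde{B}_0$.

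One side remark in your write-up is wrong, though harmlessly so. Freeness of the $G$-action on $A\times B$ does \emph{not} force the translation representation $G\to A$ to be injective: a nontrivial $g$ acting trivially on $A$ can still act freely, namely by a nonzero \emph{pure translation} of $B$; your fixed-point argument only rules out elements whose action on $B$ has nontrivial rotation part. Injectivity is instead part of the normalized Bagnera--de Franchis presentation (one absorbs any such $g$ into $B$ by replacing $B$ with $B/\langle g\rangle$, after which $G$ is a subgroup of $A$), and this is the presentation the paper and \cite{nuer} use. Alternatively, your proof survives without injectivity: the same freeness argument does show that $K:=\ker(G\to A)$ acts on $B$ by translations, hence $K\subseteq H\subseteq H'$, and therefore $\deg(A/H'\to A/G)=[G/K:H'/K]=[G:H']=d$ exactly as you need; the degree count for $B/H'\to B/G$ is likewise unaffected since $\ker(G\to\text{Aut}(B))\subseteq H$.
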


\begin{Lem}[{\cite[Lemma 2.5]{nuer}}] \label{lem:intermediate bielliptic lambda bigger than 1}
If $\lambda_S>1$, then there is a bielliptic surface $\tilde{S}$ sitting as an intermediate \'{e}tale cover between $S$ and $A\times B$, $\pi':\tilde{S} \to S$, such that $\lambda_{\tilde{S}}=1$, $\ord(\omega_{\tilde{S}})=\ord(\omega_S)$, and \[ \pi'^*(A_0)=\lambda_S\tilde{A}_0,\quad\pi'^*(B_0)=\tilde{B}_0,
\]
where $\tilde{A}_0,\tilde{B}_0$ are the natural generators of $\Num(\tilde{S})$.  
\end{Lem}

\subsection*{Stable sheaves on bielliptic surfaces}

The topological invariants of a coherent sheaf $\mathscr{F}$ on a smooth projective surface $X$ are encoded in its Mukai vector
\[
\mathbf{v}(\mathscr{F}) := \text{ch}(\mathscr{F})\sqrt{\text{td}(X)} \in H^\bullet(X,\mathbb{Q}).
\]
The definition of $\mathbf{v}$ extends to the bounded derived category of $X$ and factors through the numerical Grothendieck group $K(X)$. We denote by $\Hal(X,\mathbb{Z})$ the algebraic Mukai lattice of $X$, that is, the image of $K(X)$ under $\mathbf{v}$. In the cases we are interested in, $X$ is either a bielliptic surface or an abelian surface. In both cases $\mathbf{v}(\mathscr{F}) = \text{ch}(\mathscr{F})$ and
\[
\Hal(X,\mathbb{Z}) = H^0(X,\mathbb{Z}) \oplus \Num(X) \oplus H^4(X,\mathbb{Z}).
\]
The algebraic Mukai lattice carries a bilinear pairing, called Mukai pairing, which is defined as $\langle (r,c,s),(r',c',s')\rangle = c\cdot c' - rs' - r's$.
By the Hirzebruch-Riemann-Roch theorem, we have
\[
\langle \mathbf{v}(\mathscr{F}),\mathbf{v}(\mathscr{E})\rangle =-\chi(\mathscr{F},\mathscr{E}).
\]

Now let $S$ be a bielliptic surface. For any Mukai vector $\mathbf{v} \in \Hal(S,\mathbb{Z})$, there is a locally finite set of walls in the ample cone corresponding to polarizations $H$ for which a $\mu_H$-semistable sheaf of Mukai vector $\mathbf{v}$ contains a subsheaf of strictly smaller positive rank and same $H$-slope. A polarization $H$ is said to be \emph{generic} with respect to $\mathbf{v}$ if it does not lie in any wall for $\mathbf{v}$. The Bogomolov inequality on Chern classes of semistable sheaves implies that a necessary condition for the existence of a $\mu_H$-semistable sheaf of Mukai vector $\mathbf{v}$ is $\mathbf{v}^2 \geq 0$. Nuer \cite[Thm. 1.1]{nuer} proved that for generic polarizations $H$ this condition is also sufficient. We denote by $M_H(\mathbf{v})$ the moduli space parametrizing flat families of $\mu_H$-semistable sheaves on $S$ of Mukai vector $\mathbf{v}$. It is a projective variety containing the moduli space $M_H^{\mu s}(\mathbf{v})$ of $\mu_H$-stable sheaves of Mukai vector $\mathbf{v}$ as open (possibly empty) subset.

\begin{Def}
A Mukai vector $\mathbf{v}$ is said to be primitive if it is not divisible in $\Hal(S,\mathbb{Z})$.
For a primitive Mukai vector $\mathbf{v}=(r,aA_0+bB_0,s)$ we set $c_1(\mathbf{v}) := aA_0+bB_0$ and we define
\[
l(\mathbf{v}) = \gcd(r,\pi^*c_1(\mathbf{v}),\text{ord}(\omega_S)s) := \gcd(r,a,\frac{\text{ord}(\omega_S)}{\lambda_S}b,\text{ord}(\omega_S)s),
\]
so that $\frac{\pi^*\mathbf{v}}{l(\mathbf{v})}$ is primitive in the algebraic Mukai lattice of $\tilde{X}$.
\end{Def}

We remark that since our definition of the Mukai vector does not keep track of the torsion part of $H^2(S,\mathbb{Z})$, we have to expect $M_H(\mathbf{v})$ to be in general not connected for bielliptic surfaces $S$ of type $1,2,3,5$. Indeed, $M_H(\mathbf{v})$ decomposes as
\[
M_H(\mathbf{v}) = \bigsqcup_{[L]_{\alg}} M_H(\mathbf{v},L),
\]
where each $M_H(\mathbf{v},L)$ contains the sheaves $\mathscr{F}$ in $M_H(\mathbf{v})$ such that $\det(\mathscr{F})$ is algebraically equivalent to $L$ and the union is over all algebraic equivalence classes of line bundles of numerical class $c_1(\mathbf{v}) \in \Num(S)$, which is a set of cardinality equal to the cardinality of $H^2(S,\mathbb{Z})_{\tor}$. In particular, if $\mathbf{v} = (r,c,s)$ is such that $\mathbf{v}^2 \geq 0$ and $r$ is coprime with the orders of all elements in $H^2(S,\mathbb{Z})_{\tor}$ then all pieces of the above disjoint union are non-empty. The following lemma shows that, under suitable assumptions, this decomposition is exactly the decomposition of $M_H(\mathbf{v})$ into irreducible components.

\begin{Lem} \label{irredcomp}
Let $S$ be a bielliptic surface, let $D \in \Num(S)$ and $s$, $r \geq 2$ be integers such that 
\[
\gcd(r,D) = \gcd(r,\ord(\omega_S)) = 1, \quad D^2 -2rs >0.
\]
Then the number of irreducible components of $M_H(r,D,s)$ is equal to the cardinality of $H^2(S,\mathbb{Z})_{\tor}$.
\end{Lem}

\begin{proof}
    We claim that $M_H^{\mu s}(r,D,s)$ is smooth and that the number of its connected components is equal to the cardinality of $H^2(S,\mathbb{Z})_{\tor}$, from which the result follows, since each irreducible component of $M_H(r,D,s)$ contains a $\mu_H$-stable sheaf by \cite[Thm. 1.1]{nuer}.
    
    Let $E$ be a $\mu_H$-stable sheaf in $M_H(r,D,s)$, then
    \[
    \Ext^2(E,E) \cong \Hom(E,E\otimes \omega_S)^{*}.
    \]
    Here $E$ and $E\otimes \omega_S$ are $\mu_H$-stable sheaves of the same slope, hence $\Hom(E,E\otimes \omega_S)$ is non-zero if and only if $E \cong E\otimes \omega_S$. However, this implies that $E$ is the pushforward of a sheaf on the canonical cover $\tilde{X}$, which is not compatible with the assumption $\gcd(r,\ord(\omega_S)) = 1$. Thus $\text{Ext}^2(E,E) = 0$, hence $M_H^{\mu s}(r,D,s)$ is smooth at $[E]$.

    Now recall from \cite[Lemma 10.1]{nuer} that for any Mukai vector $\mathbf{v}$ such that $\mathbf{v}^2 \geq 0$, the virtual Hodge polynomial 
    \[
    e(M_H(\mathbf{v})) = \sum_{p,q} e^{p,q}(M_H(\mathbf{v}))x^py^q,
    \]
    where $e^{p,q}(M_H(\mathbf{v})) = \sum_k (-1)^k h^{p,q}(H^k(M_H(\mathbf{v}),\mathbb{Z}))$ for the Hodge numbers $h^{p,q}$ of the natural mixed Hodge structure on the cohomology of $M_H(\mathbf{v})$, is invariant under derived autoequivalences of $S$. 

    Notice that if we write $D = xA_0 + yB_0$, then the fiber degree of $\mathbf{v} = (r,D,s)$ is $\frac{x\lvert G \rvert}{\ord(\omega_S)}$ along $f:S \to A/G$ and $\ord(\omega_S)y$ along $g:S \to \mathbb{P}^1$, therefore our coprimality assumptions imply that $r$ is coprime to at least one of the two fiber degrees. Then we can use the derived autoequivalences (relative to $f$ or $g$) constructed by Bridgeland \cite{brid} to conclude that $M_H(r,D,s)$ has the same Hodge polynomial as that of $M_H(1,0,-\mathbf{v}^2/2)$. Indeed, if $\gcd\left(r,\frac{x\lvert G \rvert}{\ord(\omega_S)}\right) = 1$, then, letting $d := \frac{x\lvert G \rvert}{\ord(\omega_S)}$, there are unique integers $a,b$ such that $br-ad = 1$ and $1 \leq a < r$. By \cite[Thm. 5.3]{brid}, together with the fact that bielliptic surfaces have no non-trivial Fourier-Mukai partners \cite[Prop. 6.2]{MR1827500}, to the matrix $\begin{psmallmatrix}
    -b &a\\
    d &-r
\end{psmallmatrix} \in \text{SL}_2(\mathbb{Z})$ one can associate a derived autoequivalence $\Psi:D^b(S) \to D^b(S)$ such that, for $E \in M_H(r,D,s)$, $\Psi(E)$ has rank 1 and fiber degree zero along $f$. Twisting with a line bundle on $S$, we may assume that $\det(\Psi(E))$ is numerically trivial. But then, as the Mukai pairing is preserved by derived autoequivalences, $\Psi(E) \in M_H(1,0,-\mathbf{v}^2/2)$. If $\gcd(r,\ord(\omega_S)y) = 1$, the argument is analogous. 
    
Now, notice that
    \[
    M_H(1,0,-\mathbf{v}^2/2) \cong \ker(\Pic(S) \to \Num(S)) \times \text{Hilb}^{\frac{\mathbf{v}^2}{2}}(S),
    \]
    so its number of connected components is equal to the cardinality of $H^2(S,\mathbb{Z})_{\tor}$. The claim then follows by noticing that the number of connected components of a smooth variety is the rank of the singular cohomology group in degree zero, thus can be read from the constant term of the Hodge polynomial.
\end{proof}

\section{Weak Brill-Noether for line bundles on bielliptic surfaces}

In this section we determine which components of the Picard scheme of a bielliptic surface $S$ satisfy the weak Brill-Noether property. In particular, we will see that this property depends on the existence of non-zero torsion elements in $H^2(S,\mathbb{Z})$ only for divisors of self intersection zero.

\begin{Prop}[{\cite[Lemma 1.3]{serrano}}] \label{p0}
Let $D$ be a divisor on $S$ of numerical class $aA_0 + bB_0$, with $a,b \in \mathbb{Z}$. Then:
\begin{itemize}
    \item[$1)$] $\chi(S,\mathscr{O}_S(D)) = ab$;
    \item[$2)$] $D$ is ample if and only if $a>0$, $b>0$ and in this case $H^i(S,\mathscr{O}_S(D)) = 0$ for $i\geq 1$;
    \item[$3)$] If $H^0(S,\mathscr{O}_S(D)) \neq 0$, then $a\geq 0$, $b\geq 0$.
\end{itemize}
\end{Prop}

In the formalism of Definition \ref{wbn} we thus obtain the following result.

\begin{Cor} \label{c1}
For all bielliptic surfaces $S$, the moduli space of line bundles of numerical class $D = aA_0 + bB_0$ satisfies the weak Brill-Noether property if $ab\neq0$ or $a=b= 0$.
\end{Cor}

\begin{proof}
In the case $ab\neq 0$, the claim follows easily from Proposition \ref{p0} using Serre duality and the fact that $\omega_S \in \Pic^0(S)$. For $a=b=0$, notice that the moduli space of numerically trivial line bundles on $S$ contains $\Pic^0(S)$ and all non-trivial line bundles in $\Pic^0(S)$ have vanishing $H^0$. But then $H^2(S,\alpha) = 0$ for all $\alpha \in \Pic^0(S) \smallsetminus \{\omega_S\}$ by Serre duality, proving the weak Brill-Noether property, which is equivalent to the general line bundle in $\text{Pic}^0(S)$ having no non-zero cohomology group, since $\chi(S,\mathscr{O}_S) = 0$.
\end{proof}

\begin{Rem} \label{numtriv}
Recall that if $H^2(S,\mathbb{Z})_{\tor}= 0$, then algebraic and numerical equivalence of divisors are equivalent, so that the moduli space of line bundles with fixed numerical class is a smooth irreducible torsor under $\Pic^0(S)$.  On the other hand, each torsion element of $H^2(S,\mathbb{Z})$ produces a new connected component of this moduli space. Clearly, Proposition \ref{p0} gives the weak Brill-Noether property for all such connected components if $ab \neq 0$. In fact, this stronger result also holds for numerically trivial line bundles, as will be clear in the proof of Lemma \ref{lemma2wbn}.
\end{Rem}

The following Lemma shows that we can reduce the computation of the cohomology of the general line bundle in a component of $\Pic(S)$ to a computation on the canonical cover of $S$.

\begin{Lem} \label{l1}
Fix a connected component $M$ of the Picard scheme of $S$. If there exists $L$ in $M$ such that $\dim H^i(\tilde{X},\pi^{*}L) \leq \ord(\omega_S)-1$, then the general line bundle in $M$ has vanishing cohomology in degree $i$.
\end{Lem}

\begin{proof}
Since $\pi$ is a finite morphism, in particular affine, it has no higher pushforward and by construction we have $\pi_{*}\mathscr{O}_{\tilde{X}} \cong \bigoplus_{j = 1}^{\ord(\omega_S)} \omega_S^{\otimes j}$.
Therefore, a direct application of the projection formula and of Leray spectral sequence gives
\[
H^i(\tilde{X},\pi^{*}L) = \bigoplus_{j=1}^{\ord(\omega_S)} H^i(S,L \otimes \omega_S^{\otimes j}).
\]
Now, since $\omega_S \in \Pic^0(S)$, all line bundles $L \otimes \omega_S^{\otimes j}$ lie in the same connected component of the Picard scheme of $S$. If $\dim H^i(\tilde{X},\pi^{*}L) \leq \ord(\omega_S)-1$, then at least one of the summands in the right hand side is zero, hence the claim follows by upper semicontinuity of cohomology, as any connected component of the Picard scheme is clearly irreducible by smoothness.
\end{proof}

\begin{Lem} \label{lemma1wbn}
Let $S$ be a bielliptic surface. Then, the moduli space of line bundles of numerical class $aA_0$ satisfies the weak Brill-Noether property for all $a \in \mathbb{Z}$.
\end{Lem}

\begin{proof}
    Notice that by Serre duality and Corollary \ref{c1} we can reduce to consider the case $a > 0$, in which case the cohomology clearly vanishes in degree 2 for slope-stability reasons. Furthermore, since $A^2= 0$, we have $h^0(S,L) = h^1(S,L)$ for all line bundles $L$ of numerical class $aA_0$, thus the weak Brill-Noether property is verified if and only if $H^0(S,L) = 0$ for some $L$. This moduli space always contains the line bundles of the form $L \otimes f^{*}\alpha$, for $\alpha \in \Pic^0(A/G)$ and
    \[
    L  = \mathscr{O}_S\left(\left\lfloor\frac{a}{\ord(\omega_S)}\right\rfloor A + \sum_i a_iD_i\right)
    \]
    where $D_i$ denotes a reduced fiber of $g$ of multiplicity $m_i$, $0\leq a_i \leq m_i-1$ and
    \[
    a = \ord(\omega_S)\left\lfloor\frac{a}{\ord(\omega_S)}\right\rfloor + \sum_i \frac{\ord(\omega_S)}{m_i}a_i.
    \]
    Notice that we have a finite \'{e}tale quotient $\psi:A \times B \to \tilde{X}$, which is the identity in the split case, so that $\mathscr{O}_{\tilde{X}}$ is a direct summand of $\psi_{*}\mathscr{O}_{A \times B}$. Now, $(\pi\circ\psi)^{*}L$ is clearly the pullback of a positive line bundle $L' \in \Pic(B)$ under the projection $A\times B \to B$. Therefore, letting $p:A\to A/G$, we can compute, using Künneth formula and noticing that for general $\alpha$ one has $p^{*}\alpha \in \Pic^0(A) \smallsetminus \{\mathscr{O}_A\}$,
    \[
    H^0(\tilde{X},\pi^{*}(L\otimes f^*\alpha)) \hookrightarrow H^0(A \times B, p^{*}\alpha \boxtimes L') = 0.
    \]
    Therefore, the claim follows by Lemma \ref{l1}.
\end{proof}

\begin{Lem} \label{lemma2wbn}
Let $S$ be a bielliptic surface.
\begin{itemize}
    \item[$1)$] If $S$ is of type $1,2,3,5$, so that $H^2(S,\mathbb{Z})_{\tor} \neq 0$, then the moduli space of line bundles of numerical class $bB_0$ satisfies the weak Brill-Noether property for all $b \in \mathbb{Z}$.
    \item[$2)$] If $S$ is of type $4,6,7$, so that $H^2(S,\mathbb{Z})_{\tor} = 0$, then the weak Brill-Noether property fails exactly for the moduli spaces of line bundles of numerical class $bB_0$, with $\lvert b \rvert \geq \lambda_S$.
\end{itemize}
\end{Lem}

\begin{proof}
    Let us start from point (2). As in the previous Lemma we reduce to consider the case $b > 0$. As before, this implies $H^2(S,L) = 0$ and $h^0(S,L) = h^1(S,L)$ for any line bundle of numerical class $bB_0$, thus it is enough to compute $H^0(S,L)$. Assume first that $bB_0$ is an integer multiple of a fiber of $f$, that is, $\lambda_S \mid b$. Using Leray spectral sequence for $f$ we can thus compute, for any $\alpha \in \Pic^0(A/G)$,
    \[
    \dim H^0(S,\mathscr{O}_S(bB_0) \otimes f^{*}\alpha) = \dim H^0(A/G, \mathscr{M}),
    \]
    where $\mathscr{M}$ is line bundle of degree $\frac{b}{\lambda_S}$ on $A/G$, hence $\dim H^0(S,\mathscr{O}_S(bB_0) \otimes f^{*}\alpha) = \frac{b}{\lambda_S}$. Consequently, $H^0(S,\mathscr{O}_S(bB_0) \otimes f^{*}\alpha) \neq 0$ for all $b \geq \lambda_S$. If $H^2(S,\mathbb{Z})_{\tor} = 0$, then every line bundle of numerical class $bB_0$ has form $\mathscr{O}_S(bB_0) \otimes f^{*}\alpha$ for some $\alpha \in \Pic^0(A/G)$, hence we conclude that in this case the weak Brill-Noether property fails when $\lvert b \rvert \geq \lambda_S$.

    The case $\lvert b \rvert < \lambda_S$, which only arises for non-split bielliptic surfaces, is different. Indeed, in this case, 
    \[
    \pi^*\mathscr{O}_S(bB_0) = \mathscr{O}_{\tilde{X}}\left(\frac{\ord(\omega_S)b}{\lambda_S}B_{\tilde{X}}\right),
    \]
    therefore, using the fibration $\tilde{X} \to A/H$, we can compute
    \[
    \dim H^0(\tilde{X}, \pi^*(\mathscr{O}_S(bB_0) \otimes f^*\alpha)) \leq \ord(\omega_S) -1
    \]
    and thus the general line bundle of numerical class $bB_0$ has vanishing $H^0$ by Lemma \ref{l1}.

    The proof of point (2) is then complete and it remains to check the weak Brill-Noether property for line bundles of the form $\mathscr{O}_S(bB_0) \otimes f^{*}\alpha \otimes \mathscr{M}$, where $\alpha \in \Pic^0(A/G)$ and $c_1(\mathscr{M})$ is a non-zero element in $ H^2(S,\mathbb{Z})_{\tor}$, which exists if $S$ is of type $1,2,3,5$. Recall that $ H^2(S,\mathbb{Z})_{\tor}$ has been computed explicitly in \cite{torsion}, we can thus work case by case.

    \textbf{Type 1} In this case, the fibration $g:S \to \mathbb{P}^1$ has 4 multiple fibers, each with multiplicity 2. Denote their underlying reduced scheme by $D_1,\dots,D_4$. Then there are 3 non-zero torsion classes in $H^2(S,\mathbb{Z})$, corresponding to the non-isomorphic line bundles $\mathscr{O}_S(D_1 - D_i)$, for $2 \leq i \leq 4$. Since all multiple fibers have the same multiplicity, equal to 2, and the canonical cover of $X$ is $A \times B$ we have
    \[
    \pi^{*}\mathscr{O}_S(D_1 - D_i) \cong \mathscr{O}_A \boxtimes \mathscr{O}_B(P - Q_i),
    \]
    for distinct 2-torsion points $P,Q_i \in B$. Therefore,
    \[
    H^0(\tilde{X},\pi^{*}(\mathscr{O}_S(bB_0) \otimes \mathscr{O}_S(D_1 - D_i) \otimes f^{*}\alpha)) \cong H^0(A,L) \otimes H^0(B,\mathscr{O}_B(P-Q_i)) = 0,
    \]
    where $L$ is a line bundle of degree $2b$ on $A$.
    We conclude that for bielliptic surfaces of type 1 the weak Brill-Noether property holds also for line bundles of numerical class $bB_0$; more precisely, the generic line bundle in 3 of the 4 connected components of the moduli space of line bundles of numerical class $bB_0$ has no non-zero cohomology group, as $\chi(S,\mathscr{O}_S(bB_0)) = 0$.

    \textbf{Type 2} As in the previous case, $g:S \to \mathbb{P}^1$ has 4 multiple fibers with multiplicity 2, with associated reduced schemes $D_1,\dots,D_4$. The differences $D_1 - D_i$ induce 3 torsion classes in $H^2(S,\mathbb{Z})$, one of which is trivial while the other 2 are non-zero and coincide. The same computation as above, with the only difference that one needs to consider the composition of $\pi$ with the finite \'{e}tale quotient $\psi:A \times B \to \tilde{X}$, gives the weak Brill-Noether property for one of the 2 connected components of $\{ L \in \Pic(S): L \equiv_{\text{num}} bB_0\}$.

    \textbf{Type 3} In this case, $g:S \to \mathbb{P}^1$ has 2 multiple fibers with multiplicity 4 and one with multiplicity 2. We denote by $D_1,D_2$ the reduced multiple fibers with multiplicity 4 and by $D$ the one with multiplicity 2. Then $\mathscr{O}_S(D-2D_1)$ and $\mathscr{O}_S(D-2D_2)$ induce the same non-zero torsion element of $H^2(S,\mathbb{Z})$ and $\pi^{*}\mathscr{O}_S(D - 2D_1) \cong \mathscr{O}_A \boxtimes \mathscr{O}_B(P-Q)$ for two distinct points $P,Q \in B$ that are fixed by the action of $\mathbb{Z}/4\mathbb{Z}$ on $B$. Thus, a similar computation as above implies the weak Brill-Noether property for one of the 2 connected components of $\{ L \in \Pic(S): L \equiv_{\text{num}} bB_0\}$.

    \textbf{Type 5} Here $g:S \to \mathbb{P}^1$ has 3 multiple fibers, each with multiplicity 3 and there are 2 non-zero elements in $ H^2(S,\mathbb{Z})_{\tor}$, induced by $\mathscr{O}_S(D_1-D_i)$, $i=1,2$, where $D_i$ denote as above the reduced multiple fibers. Then the same computation as for type 1 gives the weak Brill-Noether property for 2 of the 3 connected components of $\{ L \in \Pic(S): L \equiv_{\text{num}} bB_0\}$.
\end{proof}

\begin{Rem} \label{littlerem}
    We remark that the computation in the proof of Lemma \ref{lemma1wbn} is not affected by tensoring with the line bundles inducing non-zero torsion elements of $H^2(S,\mathbb{Z})$. Therefore, \emph{all} irreducible components of the moduli space of line bundles with numerical class $aA_0$ satisfy the weak Brill-Noether property.
\end{Rem}

For sheaves of higher rank, understanding the weak Brill-Noether property is much harder. However, in the case of isotropic Mukai vectors, we have the following result.

\begin{Prop} \label{isotropic}
Let $S$ be a bielliptic surface and $H$ be a generic polarization with respect to a primitive isotropic Mukai vector $\mathbf{v}=(r,D,s)$, with $r \geq 1$ and $D^2 \neq 0$. Let $n$ be a positive integer such that $nl(\mathbf{v}) \mid \text{\emph{ord}}(\omega_S)$. Then the moduli space $M_H(n\mathbf{v})$ of $\mu_H$-semistable sheaves of Mukai vector $n\mathbf{v}$ satisfies the weak Brill-Noether property.
\end{Prop}

\begin{proof}
    By \cite[Thm. 1.1]{nuer}, the condition $nl(\mathbf{v}) \mid \ord(\omega_S)$ ensures that there exists a $\mu_H$-stable torsion-free sheaf $\mathscr{F}$ of Mukai vector $n\mathbf{v}$. By \cite[Lemma 7.3]{nuer}, the pullback $\pi^{*}\mathscr{F}$ of $\mathscr{F}$ to the canonical cover $\tilde{X}$ is either $\mu_{\pi^{*}H}$-stable, in which case $n=l(\mathbf{v}) = 1$, or it is of the form
    \[
    \pi^{*}\mathscr{F} = \bigoplus_{j= 0}^{d-1} (g^j)^{*}\mathscr{G},
    \]
    for a generator $g$ of $\mathbb{Z}/\ord(\omega_S)\mathbb{Z} \leq G$, $2 \leq d \leq \ord(\omega_S)$ such that $d \mid \text{ord}(\omega_S)$ and some $\mu_{\pi^{*}H}$-stable sheaf $\mathscr{G}$ such that the $(g^j)^{*}\mathscr{G}$ are distinct for $0 \leq j \leq d-1$. In particular, $\mathscr{G}$ is a simple semi-homogeneous sheaf on the abelian variety $\tilde{X}$, hence by \cite[Thm. 5.8]{Mukai} it is the pushforward of a line bundle $L$ along an isogeny $\psi: Y \to \tilde{X}$. In the first case, the same is true for $\pi^{*}\mathscr{F}$. The condition $D^2 \neq 0$ forces $L^2 \neq 0$. If $L^2 > 0$, then either $L$ is ample, so that $H^1(Y,L) = H^2(Y,L) = 0$, or the dual $L^{*}$ is ample, so that $H^0(Y,L) = H^1(Y,L) = 0$. On the other hand, if $L^2<0$, then $H^0(Y,L) = H^2(Y,L) = 0$. In all cases, using the Leray spectral sequence for $\psi$ and $\pi$, we deduce that the same cohomology vanishing holds for $\mathscr{F}$. We thus have proved that \emph{all} $\mu_H$-stable sheaves in $M_H(n\mathbf{v})$ have at most one non-zero cohomology group.
\end{proof}

\section{Ulrich bundles on bielliptic surfaces}

Let us start by showing how the Hirzebruch-Riemann-Roch Theorem together with Bogomolov inequality restricts the possible Mukai vectors of Ulrich bundles to a finite list for each rank $r\geq 1$.

\begin{Prop} \label{p1}
Let $E$ be a Ulrich bundle on a bielliptic surface $S$ polarized by an ample divisor of numerical class $H = aA_0 + bB_0$ and assume that $H$ is not divisible in $\Num(S)$. Then its Mukai vector has form 
\[
\mathbf{v}^{\Ulrich}(r,k) := (r,kaA_0 + (3r-k)bB_0,2rab)
\]
for some integers $r\geq 1$, $r \leq k \leq 2r$.
\end{Prop}

\begin{proof}
    Since $E$ is Ulrich, we have $\chi(E(-H) ) = \chi(E(-2H)) = 0$, hence the Hirzebruch-Riemann-Roch Theorem implies that
    \begin{equation} \label{eq1}
        \mathbf{v}(E) = (r,D,0)e^H
    \end{equation}
    where $D \in \Num(S)$ satisfies
    \[
    2D\cdot H = rH^2.
    \]
    Writing $D = xA_0 + yB_0$ and using the fact that $\Num(S)$ is isomorphic to a hyperbolic plane, this means
    \[
    xb + ya = rab,
    \]
    which is clearly solved by 
    \[
    \begin{cases} x=ja \\y = (r-j)b \end{cases}
    \]
    for $j \in \mathbb{Z}$. Conversely, if $(x_0,y_0)$ is any integer solution to this equation, then $x_0b +(y_0-rb)a = 0$, which forces $x_0$ to be an integer multiple of $a$, as $\gcd(a,b) = 1$ by assumption. Therefore, these are the only integer solutions. 
    
    Moreover, since Ulrich bundles are $\mu_H$-semistable, the Bogomolov inequality implies $D^2 \geq 0$, thus $0 \leq j \leq r$. The claim then follows from (\ref{eq1}) putting $k = j + r$.
\end{proof}

\begin{Rem} \label{divisible}
If we drop the assumption that $H$ is not divisible in $\Num(S)$, then the possible Mukai vectors of Ulrich bundles of fixed rank $r$ with respect to $H$ are still finitely many. Indeed, keeping the same notation of the previous proof, i.e. $c_1(E(-H)) = xA_0 + yB_0$, we have that $(x,y) \in \Num(S)$ must be a lattice point of the line $\{xb + ya = rab\}$ lying in the nef cone.

As for the existence, we remark that by \cite[Cor. 3.2]{beauville}, the existence of Ulrich bundles of rank $r$ with respect to $H$ implies the existence of Ulrich bundles of rank $2r$ with respect to any multiple of $H$.
\end{Rem}

\begin{Rem} \label{ulrichdual}
We remark that if $E$ is a Ulrich bundle with respect to a polarization $H$, then also $E^{*}\otimes \omega_S \otimes \mathscr{O}_S(3H)$ is Ulrich with respect to $H$. Indeed, by Serre duality, one has, for $1 \leq j \leq 2$
\[
H^{\bullet}(S,E^{*}\otimes \omega_S \otimes \mathscr{O}_S((3-j)H)) \cong H^{2-\bullet}(S,E \otimes \mathscr{O}_S((j-3)H))^{*} = 0.
\]
This means that the existence of Ulrich bundles of Mukai vector $\mathbf{v}^{\Ulrich}(r,k)$ is equivalent to the existence of Ulrich bundles of Mukai vector $\mathbf{v}^{\Ulrich}(r,3r-k)$.
\end{Rem}

\begin{Rem} \label{ulrichlinebundles}
For $r=1$, we obtain that a Ulrich line bundle with respect to $H = aA_0 + bB_0$, if it exists, has numerical class $aA_0 + 2bB_0$ or $2aA_0 + bB_0$. By Remark \ref{ulrichdual}, the two cases are equivalent to consider. Given a divisor $D$ of numerical class $aA_0 + 2bB_0$, we have $\chi(S,\mathscr{O}_S(D-H)) = \chi(S,\mathscr{O}_S(D-2H)) = 0$, thus a Ulrich line bundle of numerical class $aA_0 + 2bB_0$ exists if and only if there exists an irreducible component of $
\{ L \in \Pic(S): L \equiv_{\text{num}} bB_0\}$
which satisfies the weak Brill-Noether property and is mapped by $- \otimes \mathscr{O}_S(-H)$ to an irreducible component of $\{ L \in \Pic(S): L \equiv_{\text{num}} -aA_0\}$ that also satisfies the weak Brill-Noether property. Under the assumption $b \geq \lambda_S$, this happens if and only if $S$ is of type $1,2,3,5$ by Proposition \ref{wbnlb} and Remark \ref{littlerem}.
\end{Rem}

\begin{Prop} \label{special}
Every bielliptic surface polarized by an ample divisor $H$ carries a $\mu_H$-stable Ulrich bundle of Mukai vector $\mathbf{v}^{\Ulrich}(2,3) = (2,3H,2H^2)$.
\end{Prop}

\begin{proof}
Such Ulrich bundles were originally constructed by Beauville (see e.g. \cite{beauville}, \cite{casnati}) using the Cayley-Bacharach property on zero-dimensional subschemes of $X$. In view of the existence result \cite[Thm. 1.1]{nuer}, let us give here a different proof, inspired by an argument of \cite{Nijsse}. 

Let $E$ be a $\mu_H$-stable vector bundle in $M_H(2,3H,2H^2)$, which exists by \cite[Thm. 1.1]{nuer}. Then $\mu_H(E(-H)) > 0$, hence by stability and Serre duality we have $H^2(S,E(-H)) = 0$. Similarly $H^0(S,E(-2H)) = 0$. A direct computation gives $\chi(S,E(-H)) = \chi(S,E(-2H)) = 0$ and $\mathbf{v}(E(-H)) = \mathbf{v}(E^{*}(2H) \otimes \omega_S) = (2,H,0)$, thus, using Serre duality, the claim reduces to prove that the general $\mu_H$-stable sheaf in $M_H(2,H,0)$ has no non-zero global sections.

Define 
\[
W = \{\mathscr{F} \in M_H^{\mu s}(2,H,0): H^0(S,\mathscr{F}) \neq 0\}
\]
and let $\mathscr{F} \in W$. If we take a non-zero global section $s$ of $\mathscr{F}$, $\text{coker}(s)$ is a torsion-free sheaf of rank 1, thus $\mathscr{F}$ sits in a short exact sequence
\[
\begin{tikzcd}
0 \arrow[r] & \mathscr{O}_S \arrow[r] & \mathscr{F} \arrow[r] & \mathscr{I}_Z (H) \arrow[r] & 0,
\end{tikzcd}
\]
where $Z$ is a zero-dimensional subscheme of $S$ of length $\frac{H^2}{2}$. Thus, if we let 
\[
V = \{(\mathscr{F},s):\mathscr{F}\in W,s\in \mathbb{P}H^0(S,\mathscr{F})\}
\]
we have a morphism $\varphi: V\to \text{Hilb}^{\frac{H^2}{2}}(S)$ whose fiber is
\[
\varphi^{-1}(Z) = \mathbb{P}\text{Ext}^1(\mathscr{I}_Z (H),\mathscr{O}_S).
\]
Using Serre duality and the fact that $\chi(\mathscr{I}_Z (H+K_S))= 0$, where $K_S$ is a canonical divisor, we see that the fiber of $\varphi$ over $Z$ is non-empty if and only if $h^0(S,\mathscr{I}_Z (H+K_S)) \geq 1$. Therefore, $\text{im}(\varphi) \subseteq \bigcup_{i\geq 0} \Delta_i$, where
\[
\Delta_i = \{Z \in \text{Hilb}^{\frac{H^2}{2}}(S):h^0(S,\mathscr{I}_Z (H+K_S)) = 1 + i\}.
\]
Now consider the incidence variety
\[
T = \{(C,Z)\in \vert H+K_S \vert \times \text{Hilb}^{\frac{H^2}{2}}(S): Z \subseteq C\}
\]
with its projections $\pi_1$ and $\pi_2$ to the linear system of $H+K_S$ and $\text{Hilb}^{\frac{H^2}{2}}(S)$ respectively. Clearly, $Z \subseteq C$ for a curve $C$ cut out by $s \in H^0(S,\mathscr{O}_S(H+K_S))$ if and only if $s$ lies in the kernel of the restriction map $H^0(S,\mathscr{O}_S(H+K_S)) \to H^0(S,\mathscr{O}_Z(H+K_S))$, which is $H^0(S,\mathscr{I}_Z(H+K_S))$. Therefore for all $i \geq 0$ we have
\[
\frac{H^2}{2} + \frac{H^2}{2} - 1 = \text{length}(Z) + \dim \vert H+K_S \vert \geq \dim T \geq \dim \pi_2^{-1}(\Delta_i) = \dim \Delta_i + i,
\]
so, $\dim \varphi^{-1}(\Delta_i) \leq \dim\Delta_i + \dim \varphi^{-1}(Z) \leq H^2 -i -1  + i  = H^2  -1$. Using the obvious forgetful morphism $V \to W$, we thus conclude that $\dim W \leq \sup_{i \geq 1}\dim \varphi^{-1}(\Delta_i) \leq H^2 -1$. On the other hand,
\[
\dim_{[\mathscr{F}]}M_H^{\mu s}(2,H,0)  \geq \dim\Ext^1(\mathscr{F},\mathscr{F}) - \dim\Ext^2(\mathscr{F},\mathscr{F})= H^2 + 1,
\]
thus the general sheaf $\mathscr{F}$ in $M_H^{\mu s}(2,H,0)$ has $H^0(S,\mathscr{F})= 0$ and we are done.
\end{proof}

\begin{proof}[Proof of Thm. \ref{main}]
    First of all, notice that all Mukai vectors $\mathbf{v}^{\Ulrich}(r,k)$, for $r\geq 1$, $r \leq k \leq 2r$, satisfy Bogomolov inequality, thus the moduli spaces $M_H(\mathbf{v}^{\Ulrich}(r,k))$ are non-empty by \cite[Thm. 1.1]{nuer}. It is then enough to check the vanishing cohomology condition.

    It follows from Proposition \ref{p1} and Remark \ref{ulrichlinebundles} that $S$ carries Ulrich line bundles with respect to $H$ if and only if $S$ is of type $1,2,3,5$. Since direct sums of Ulrich bundles are Ulrich, a simple induction argument shows that in this case $S$ carries Ulrich bundles of Mukai vector $\mathbf{v}^{\Ulrich}(r,k)$ for all $r\geq 1$, $r \leq k \leq 2r$.
    
    Let $S$ be of type 7. Then, by Lemma \ref{lem:intermediate bielliptic order composite}, there exist bielliptic surfaces $\tilde{S}$, $\tilde{S}'$ of type 1, respectively 5, that are \'{e}tale coverings $\tilde{\pi}$, $\tilde{\pi}'$  of $S$ of degree $3$, respectively 2. Since $\tilde{S}$ and $\tilde{S}'$ carry Ulrich line bundles with respect to the pullback of $H$, then, using that
    \[
    H^i(S, \tilde{\pi}_{*}L \otimes \mathscr{O}_S(jH)) \cong H^i(\tilde{S},L \otimes \tilde{\pi}^{*}\mathscr{O}_{S}(jH)),
    \]
    we deduce that $S$ carries Ulrich bundles of Mukai vectors $\mathbf{v}^{\Ulrich}(2,2)$ and $\mathbf{v}^{\Ulrich}(3,3)$. Similarly, using Lemma \ref{lem:intermediate bielliptic lambda bigger than 1}, we obtain that if $S$ is of type 4, then it carries Ulrich bundles of Mukai vector $\mathbf{v}^{\Ulrich}(2,2)$ and if $S$ is of type 6, then it carries Ulrich bundles of Mukai vector $\mathbf{v}^{\Ulrich}(3,3)$.

    In contrast, if $S$ is of type $4$, then $S$ does not carry Ulrich bundles of Mukai vector $\mathbf{v}^{\Ulrich}(3,3)$. Indeed, the Mukai vector $\mathbf{v}^{\Ulrich}(3,3)$ is isotropic and divisible by 3 in the algebraic Mukai lattice $\Hal(S,\mathbb{Z})$, thus $M_H^{\mu s}(\mathbf{v}^{\Ulrich}(3,3))=\varnothing$ by \cite[Prop. 4.1]{nuer}. But then, since Jordan-H\"older factors of Ulrich bundles are clearly Ulrich, the existence of Ulrich bundles in $M_H(\mathbf{v}^{\Ulrich}(3,3))$ would imply the existence of Ulrich line bundles on $S$, contradicting $H^2(X,\mathbb{Z})_{\tor} = 0$. Similarly, a bielliptic surface of type 6 does not carry Ulrich bundles of Mukai vector $\mathbf{v}^{\Ulrich}(2,2)$.

    We claim that bielliptic surfaces of type 4 carry Ulrich bundles of Mukai vector $\mathbf{v}^{\Ulrich}(3,4)$, or $\mathbf{v}^{\Ulrich}(3,5)$, which is equivalent by Remark \ref{ulrichdual}. Together with Proposition \ref{special}, this concludes the proof of the Theorem. 
    
    We want to show that there exists a vector bundle $\mathscr{F} \in M_H(\mathbf{v}^{\Ulrich}(3,4))$ such that 
    \[
    H^{\bullet}(S,\mathscr{F}(-H)) = H^{\bullet}(S,\mathscr{F}(-2H)) = 0.
    \]
    Let $E$ be a Ulrich bundle of Mukai vector $\mathbf{v}^{\Ulrich}(2,2)$. The vector bundle $\mathscr{O}_S(aA_0) \oplus E(-H)$ is $\mu_H$-semistable since direct sum of $\mu_H$-semistable bundles with the same slope. This gives, after tensoring with a general element of $\Pic^0(S)$, a $\mu_H$-semistable vector bundle of Mukai vector $\mathbf{v}(\mathscr{F}(-H))$ with vanishing cohomology in all degrees, thanks to Lemma \ref{lemma1wbn}. Analogously, if $E'$ is a Ulrich bundle of Mukai vector $\mathbf{v}^{\Ulrich}(2,3)$, as constructed in Proposition \ref{special}, then the direct sum $\mathscr{O}_S(-aA_0) \oplus E'(-2H)$ is a $\mu_H$-semistable vector bundle of Mukai vector $\mathbf{v}(\mathscr{F}(-2H))$. After tensoring with a general element of $\Pic^0(S)$, this gives, by Lemma \ref{lemma1wbn}, a vector bundle in $M_H(\mathbf{v}(\mathscr{F}(-2H)))$ with vanishing cohomology in all degrees. By upper semicontinuity of cohomology it is then enough to notice that the moduli space $M_H(\mathbf{v}^{\Ulrich}(3,4))$ is in this case irreducible, which follows directly from Lemma \ref{irredcomp}.
\end{proof}

\section{Degree of irrationality of bielliptic surfaces}

Our strategy for computing $\irr(S)$ is to reduce this problem to the question of the existence of stable rank 2 vector bundles on $S$ with certain Chern classes. This strategy was first introduced by Moretti \cite{moretti} in the case of K3 surfaces and is based on the observation that, if $\varphi_V:S \dashrightarrow \mathbb{P}^2$ is a generically finite rational map induced by some $V \in \text{Gr}(3,H^0(S,L))$ with base locus $\text{Bs}(V)$, then the dual of the kernel bundle $\ker (\text{ev}:V \otimes \mathscr{O}_S \to L)$ is
\[
E \cong \varphi_V^*(T\mathbb{P}^2 \otimes \mathscr{O}_{\mathbb{P}^2}(-1)),
\]
and for a section $s \in H^0(\mathbb{P}^2,T\mathbb{P}^2 \otimes \mathscr{O}_{\mathbb{P}^2}(-1))$, the fiber of $\varphi|_{S \smallsetminus \text{Bs}(V)}$ over the point $p = Z(s) \in \mathbb{P}^2$ is exactly $Z(\varphi_V^*s) \cap (S\smallsetminus \text{Bs}(V))$. More precisely, the strategy in \cite{moretti} is based on the following definition. 

\begin{Def}[{\cite[Def. 1.3]{moretti}}] \label{goodpair}
    Given a smooth projective variety of dimension $n$, a pair $(E,V)$ with $E$ a reflexive sheaf on $X$ and $V \in \text{Gr}(n+1,H^0(X,E))$ is called a \emph{good pair} if $V$ generates $E$ in codimension 2 and $H^0(X,E^*) = 0$.
\end{Def}

Given a good pair $(E,V)$, since the evaluation map $\text{ev}_E:V \otimes \mathscr{O}_X \to E$ is surjective in codimension 2, we get that $c_1(\text{im}(\text{ev}_E)) = c_1(E)$ and $\text{im}(\text{ev}_E)^* \cong E^*$, therefore the kernel $\ker (\text{ev}_E)$ is the dual of $L = \det E$. Thus, dualizing the evaluation map we obtain an exact sequence
\[
\begin{tikzcd}
0 \arrow[r] & E^{*} \arrow[r] & V^{*} \otimes \mathscr{O}_X \arrow[r] & L,
\end{tikzcd}
\]
which identifies $V^*$ with a subspace of $H^0(X,L)$, as $H^0(X,E^*) = 0$. We can thus associate to $E$ a rational map $\varphi_V:X \dashrightarrow \mathbb{P}(V)$ with base locus given by the union of the locus where $V$ does not generate $E$ and the locus where $E$ is not locally free. The following result that we are going to apply is the combination of the general \cite[Prop. 1.5]{moretti} with its application to surfaces \cite[Cor. 1.7]{moretti}.

\begin{Thm} \label{morettithm}
Given a good pair $(E,V)$ on a smooth projective surface $S$, with associated generically finite rational map $\varphi_V:X \dashrightarrow\mathbb{P}^2$, one has
\[
\deg(\varphi_V) = c_2(E) - \deg\left(\bigcap_{s \in V} Z_{\text{\emph{cycle}}}(s)\right).
\]
\end{Thm}

\begin{Lem} \label{deg12}
Let $S$ be a bielliptic surface. Then for all ample line bundles $L = \mathscr{O}_S(D)$ such that $D^2 = 12$ and all $V \in \text{\emph{Gr}}(3,H^0(S,L))$, the rational map $\varphi_V:S \dashrightarrow \mathbb{P}^2$ is generically finite.
\end{Lem}

\begin{proof}
    First of all, notice that $\varphi_V$ is the composition
    \[
    S \dashrightarrow \mathbb{P}H^0(S,L)^* \dashrightarrow \mathbb{P}V^*,
    \]
    where the first map is the rational map induced by the complete linear system $\lvert L\rvert$ and the second one is a projection $\mathbb{P}^5 \dashrightarrow\mathbb{P}^2$. Now, by Proposition \ref{p0}, any effective divisor $E \equiv_{\text{num}} xA_0 + yB_0$ on $S$ has $x\geq 0$, $y \geq 0$ and we have already seen, as noted in Remark \ref{littlerem}, that the only line bundle of numerical class zero with non-zero space of global sections is $\mathscr{O}_S$. This implies that if $E$ is a non-zero effective divisor on $S$, we have $(D-E)^2 < D^2$, therefore
    \[
    \dim H^0(S,\mathscr{O}_S(D-E)) < \dim H^0(S,\mathscr{O}_S(D)),
    \]
    as ample line bundles on $S$ have no higher cohomology. We deduce that the linear system $\lvert L\rvert$ has no fixed components. Thus $S \dashrightarrow \mathbb{P}H^0(S,L)^*$ is defined away from a finite set of points of $S$ and consequently the same holds for $\varphi_V$; furthermore $\varphi_V^*\mathscr{O}_{\mathbb{P}^2}(1)$ extends to the line bundle $L$. Suppose by contradiction that $\varphi_V$ is not generically finite. Then it induces a dominant rational map to an irreducible plane curve $C$. Blowing up the indeterminacy locus, we obtain a morphism $\tilde{\varphi_V}:\tilde{S} \to C$. Let $\nu:C'\to C$ be the normalization of $C$. By the universal property of normalization, $\tilde{\varphi_V}$ factors through a surjective morphism $\psi:\tilde{S} \to C'$, inducing a surjective morphism from $\text{Alb}(\tilde{S})$ to the Jacobian of $C'$. Since the irregularity of $S$ is one, $C'$ has to be either an elliptic curve $E$ or $\mathbb{P}^1$.
    
    We start by dicarding the second case. If $C' = \mathbb{P}^1$, then the pullback of $L$ under the blow-up morphism $\text{Bl}:\tilde{S} \to S$ is
    \[
    \text{Bl}^*L \cong \tilde{\varphi_V}^*(\mathscr{O}_{\mathbb{P}^2}(1)|_C) \cong \psi^*\nu^*(\mathscr{O}_{\mathbb{P}^2}(1)|_C) \cong \psi^*\mathscr{O}_{\mathbb{P}^1}(d)
    \]
    for some $d \geq 2$, thus it is divisible in $\Num(\tilde{S})$. But then, it is also divisible in $\Num(S)$, since $\Num(\tilde{S}) =\bigoplus_i \mathbb{Z}E_i \oplus \Num(S)$, where $E_i$ denote the exceptional divisors of the blow-up. This is a contradiction since every line bundle of degree 12 is non-divisible in $\Num(S)$. Indeed, if we write $D = nD'$ for some positive integer $n$ and $D' \in \Num(S)$, then $D^2 = 12$ forces $n=2$ and $(D')^2=3$, contradicting the fact that $\Num(S)$ is an even lattice.
    
    Finally, in case $C'$ is an elliptic curve $E$, we use the universal property of the Albanese map to obtain the commutative square
    \[
    \begin{tikzcd}
\tilde{S} \arrow[r] \arrow[d] \arrow[dr]
& S \arrow[d, dashrightarrow] \\
\text{Alb}(\tilde{S}) \arrow[r]
& E
\end{tikzcd}
    \]
    The Albanese morphism of $\tilde{S}$ clearly contracts the exceptional divisors, thus it restricts to the Albanese morphism of $S$, that is, the rational map $\varphi_V$ factors through $f:S \to A/G$. This clearly contradicts the fact that $L$ is ample, by the characterization of the ample cone of $S$.
\end{proof}

\begin{proof}[Proof of Thm. \ref{main2}]
Let $D$ be a divisor of numerical class $3A_0 + 2B_0$ and let $E$ be a locally free sheaf in $M_H^{\mu s}(\mathbf{v})$, with $\mathbf{v} = (2,D,3)$; such $E$ exists by \cite[Thm. 1.1]{nuer}. We have $H^2(S,E) =0$ since $E$ is $\mu_H$-stable with positive slope. Therefore $H^0(S,E^*) = 0$ and, as $\mathbf{v}(E)$ is isotropic, $\dim H^0(S,E) = \chi(S,E) = 3$ by Proposition \ref{isotropic}. We claim that $(E,H^0(S,E))$ is a good pair in the sense of Definition \ref{goodpair}, that is, $E$ is globally generated in codimension 2. 

The choice of the numerical class of $D$ implies that $l(\mathbf{v}) = 1$, so that $\pi^*E$ is a simple semi-homogeneous vector bundle on $\tilde{X}$ such that $c_1(\pi^*E) = 3A_{\tilde{X}} + \frac{2\ord(\omega_S)}{\lambda_S}B_{\tilde{X}}$, with notation as in Section 2. By the criterion for global generation of vector bundles on abelian varieties proved by Pareschi \cite[Thm. 2.1]{pareschi} we have that $\pi^*E$ is globally generated. Indeed, it is enough to show that there exists an ample line bundle $L$ on $\tilde{X}$ such that $$H^{i}(\tilde{X},\pi^*E \otimes L^{-1} \otimes \alpha) = 0$$ for all $\alpha \in \Pic^0(\tilde{X})$ and $i \geq 1$. This is easily achieved, for instance with $L = \mathscr{O}_{\tilde{X}}(A_{\tilde{X}} + B_{\tilde{X}})$, as in the proof of Proposition \ref{isotropic}, since $\pi^*E \otimes L^{-1} \otimes \alpha$ is the pushforward of a positive line bundle under an isogeny. Notice here that the fact that $L$ is ample follows classically from $L^2 = 2\lambda_S > 0$, together with $H^0(\tilde{X},L) \neq 0$, using Hodge index theorem and the fact that there is no curve of negative self-intersection on $\tilde{X}$.

The global generation of $\pi^*E$ is equivalent to the surjectivity of the joint evaluation map
\[
\eta: \bigoplus_{j=1}^{\ord(\omega_S)} \left(H^0(S,E \otimes \omega_S^{\otimes j}) \otimes \omega_S^{\otimes(\ord(\omega_S)-j)}\right) \to E
\]
obtained by twisting the evaluation map for $E \otimes \omega_S^{\otimes j}$ by $\omega_S^{\otimes (\ord(\omega_S)-j)}$ for $1 \leq j \leq \ord(\omega_S)$ and then summing over $j$. Notice here that $\dim H^0(S,E \otimes \omega_S^{\otimes j}) = 3$ for all $j$ by Proposition \ref{isotropic}. This implies that $E$, as well as $E \otimes \omega_S^{\otimes j}$ for $1 \leq j < \ord(\omega_S)$, is generically globally generated\footnote{This can also be deduced from the stability of $E$. Indeed, if $\text{im}(\text{ev}_E)$ has generic rank 1, then it has form $\mathscr{I}_Z(L)$ for a zero-dimensional subscheme $Z$ of $S$ and a line bundle $L$ such that $\dim H^0(S,L) \geq 3$. Using the computations of the cohomology of line bundles of section 3, it is easy to show that this forces $\mu_H(\mathscr{I}_Z(L)) = \mu_H(L) \geq \mu_H(E)$ for all ample divisors $H = aA_0 + bB_0$ such that $\frac{b}{2} \leq a \leq 2b$, contradicting the slope stability of $E$.}, that is, the cokernel of its evaluation map $\text{ev}_E$ is a torsion sheaf $T$. Furthermore, assuming by contradiction that the support of $T$ contains a divisor, we have $$\det E \neq \det (\text{im(ev}_E)) = \det (\text{im((ev}_{E\otimes \omega_S^{j}}) \otimes \omega_S^{\ord(\omega_S)-j})).$$ But then $\det (\text{im}(\eta)) \neq \det E$, contradicting the surjectivity of $\eta$. Therefore $E$ is globally generated in codimension 2.

We thus can conclude that $(E,H^0(S,E))$ is good pair and that the dual of the evaluation map identifies a subspace $V \in \text{Gr}(3,H^0(S,\mathscr{O}_S(D)))$. By Lemma \ref{deg12}, the induced rational map $\varphi_V:S \dashrightarrow\mathbb{P}^2$ is generically finite so Theorem \ref{morettithm} gives $\deg(\varphi_V) \leq c_2(E) = 3$.
\end{proof}

\begin{Rem}
Our choice of $E$ is minimal in the following sense. In the proof of Theorem \ref{main2} we need to start with a stable vector bundle $E$ of Mukai vector $(2,D,s)$ such that $s = \chi(S,E) \geq 3$, in order to ensure that $\dim H^0(S,E) \geq 3$. On the other hand, Bogomolov inequality imposes $D^2 - 4s \geq 0$. Hence, the minimal value of $c_2(E) = \frac{D^2}{2} - s$ is obtained for $D^2 = 12$.
\end{Rem}

\subsection*{Acknowledgements}
This paper is part of the author's Ph.D. project, which is supported by the project grant n. VR2023-03837. I would like to thank my Ph.D. supervisor Sofia Tirabassi for her guidance and many helpful discussions.

\bibliographystyle{plain}
{\footnotesize
\bibliography{biblio_paper.bib}}

\end{document}